\definecolor{verydarkblue}{HTML}{000099}
\theoremstyle{plain}
\newtheorem{theorem}{Theorem}[section]
\newtheorem{lemma}[theorem]{Lemma}
\newtheorem*{theorem*}{Theorem}
\theoremstyle{definition}
\theoremstyle{remark}
\def\ZZ{{\mathbb Z}}
\def\O{{\mathcal O}}
\def\spec{{\mathrm{Spec}}}
\def\ker{{\mathrm{ker}}}
\def\G{{\mathbb{G}}}
\title
{Models of affine curves and $\mathbb{G}_{a}$-actions}
\author{Kevin Langlois}
\date{}
\address{Mathematisches Institut, Heinrich Heine Universit\"at, 40225 D\"usseldorf, Germany.}
\email{langlois.kevin18@gmail.com}
\begin{document}

\thanks{\em 2010 Mathematics Subject Classification \rm  14L15, 14L30. 
\\ {\em Key Words and Phrases: \rm  Algebraic curves, $\mathbb{G}_{a}$-actions.}}

\maketitle
\begin{footnotesize}

\hspace{6.5cm}  {\em Dedicated to Mikhail Zaidenberg on  } 

\hspace{6.5cm} {\em  the occasion of his 70-th birthday}
\end{footnotesize}
\begin{abstract}
Using the approach of Barkatou and El Kaoui, we classify certain affine curves over discrete valuation rings having a free additive group action.
Our classification generalizes results of Miyanishi in equi-characteristic $0$.
\end{abstract}

\section{Introduction}
Let $\O$ be a discrete valuation ring. Choose a  uniformizer $t\in \O$ such that $k = \O/(t)$ is the residue field, and write $K$ for the  fraction field of $\O$. 
A faithful flat integral affine scheme of finite type over $\O$ is an \emph{affine $\O$-curve} if it has relative dimension $1$ and if $K$ is algebraically closed in its function field. Our aim is to classify the models of the affine line (i.e., affine $\O$-curves whose generic fiber is isomorphic to  $\mathbb{A}^{1}_{K}$). In particular, the arithmetic surface in question inherits a non-trivial action
of the additive group scheme $\G_{a, \O}$. 

 Miyanishi described
any affine $\O$-curve with a (free) $\G_{a, \O}$-action, such that the special fiber is integral, and under the condition that $\O$ is equi-characteristic $0$ \cite[Theorem 4.3]{Miy09}.
 Barkatou and El Kaoui extended this result in 
\cite{BE12} for reduced special fibers 
over an equi-characteristic $0$ principal ideal domain. 
Using the approach of \emph{loc. cit.}, we obtain the following generalization
which is valid in any characteristic.  
\begin{theorem}\label{tt-main}
Assume that $k$ is perfect.
Let $C$ be an affine $\O$-curve with a free $\mathbb{G}_{a, \O}$-action
and reduced special fiber. Then there exist a natural number $n\geq 1$ and polynomials $f_{i}\in \O[x_{1},\ldots, x_{i}]$
for $1\leq i\leq n$ such that 
$$C\simeq \spec\,\O[x_{1},\ldots, x_{n+1}]/\left(tx_{2}-f_{1},\ldots, tx_{n+1} - f_{n}\right).$$
Moreover, the following properties are fulfilled.
\begin{itemize}
\item[(i)] The ideal $\left(t, f_{1},\ldots,f_{n}\right)$ is $0$-dimensional and radical, and the reduction modulo $t$ of the $\partial f_{i}/ \partial x_{i}$'s are invertible.
\item[(ii)] Consider the subalgebra $B = \O[\alpha_{1}(x), \ldots, \alpha_{n}(x)]\subseteq K[x]$, where the $\alpha_{i}$'s are defined as $\alpha_{1}(x) = x$ and $\alpha_{i}(x) = t^{-1}f_{i-1}(\alpha_{1}(x), \ldots, \alpha_{i-1}(x))$ for $2\leq i\leq n$. Then, under the previous isomorphism, the $\G_{a, \O}$-actions on $C$ are in one-to-one correspondence with the $\G_{a, K}$-actions
$x\mapsto x  + \sum_{j = 1}^{r} c_{j} \lambda^{p^{s_{j}}}$
 on $\mathbb{A}_{K}^{1} = \spec\, K[x]$ that let stable the algebra $B$, where $s_{j}\in \ZZ_{\geq 0}, c_{j}\in K$, and $p$ is the characteristic exponent of the field $K$.
\end{itemize}  
\end{theorem}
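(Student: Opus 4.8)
The plan is to reconstruct the coordinate ring $A:=\O[C]$ explicitly as an $\O$-subalgebra of $K[x]$, realized as an iterated affine dilatation of $\A^{1}_{\O}$ with centers in the special fibers, following the method of Barkatou and El Kaoui, and then to read off the presentation together with properties (i) and (ii) from the combinatorics of the resulting tower.

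First I would identify the generic fiber. Base-changing the free $\G_{a,\O}$-action to $K$ gives a free $\G_{a,K}$-action on the affine $K$-curve $C_{K}$; since the free action has $0$-dimensional quotient, which is trivial because $K$ is algebraically closed in the function field, $C_{K}$ is a $\G_{a,K}$-torsor over $\spec K$, and such torsors are trivial, so $C_{K}\simeq\A^{1}_{K}$. I fix a coordinate $x$ with $A\otimes_{\O}K=K[x]$; flatness (that is, $t$-torsion-freeness) of $A$ then yields an inclusion $A\hookrightarrow K[x]$ with $A[t^{-1}]=K[x]$, and after rescaling the coordinate I may assume $x\in A$, so that $\O[x]\subseteq A$.

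Next I would build the tower. Set $B_{0}=\O[x]$ and inductively $B_{i+1}=B_{i}\bigl[t^{-1}(tA\cap B_{i})\bigr]$, the one-step saturation obtained by adjoining every $t^{-1}f$ with $f\in B_{i}$ and $t^{-1}f\in A$. Each $tA\cap B_{i}$ is an ideal of the Noetherian ring $B_{i}$, hence finitely generated, so every $B_{i}$ is a finitely generated $\O$-algebra; and a minimal-power-of-$t$ argument (if $t^{N}a\in B_{0}$ then $t^{N-i}a\in B_{i}$) shows $\bigcup_{i}B_{i}=A$, whence $A=B_{N}$ for some $N$ because $A$ is of finite type. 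Reordering the finitely many adjoined generators so that each has the form $t^{-1}f_{i}(\alpha_{1},\dots,\alpha_{i})$ with $f_{i}\in\O[x_{1},\dots,x_{i}]$ produces the chain $\alpha_{1}=x,\ \alpha_{i+1}=t^{-1}f_{i}(\alpha_{1},\dots,\alpha_{i})$ and a surjection
$$\O[x_{1},\dots,x_{n+1}]/\bigl(tx_{2}-f_{1},\dots,tx_{n+1}-f_{n}\bigr)\longrightarrow A,\qquad x_{i}\mapsto\alpha_{i},$$
which is an isomorphism after inverting $t$; it is therefore injective once the source is shown to be $\O$-flat.

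It remains to analyze the special fiber, where both property (i) and the flatness just invoked originate, and which I expect to be the main obstacle. Reducing the relations modulo $t$ gives $A/tA=k[\bar x_{1},\dots,\bar x_{n+1}]/(\bar f_{1},\dots,\bar f_{n})$ with $\bar f_{i}\in k[x_{1},\dots,x_{i}]$, a triangular system in which $\bar x_{n+1}$ is free, so $C_{k}\simeq Z\times_{k}\A^{1}_{k}$ where $Z\subseteq\A^{n}_{k}$ is cut out by $\bar f_{1}=\dots=\bar f_{n}=0$. Since $C\to\spec\O$ has relative dimension one, $\dim C_{k}=1$ forces $\dim Z=0$, and reducedness of $C_{k}$ is equivalent to reducedness of $Z$, i.e. to radicality of $(t,f_{1},\dots,f_{n})$; because $k$ is perfect a reduced $0$-dimensional $k$-scheme is étale, and étaleness of $Z$ is the invertibility of the Jacobian of $(\bar f_{1},\dots,\bar f_{n})$, which is lower triangular, hence the invertibility of the diagonal entries $\partial\bar f_{i}/\partial \bar x_{i}$ on $Z$. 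Running this equivalence backwards (étale center $\Rightarrow$ $\O$-flat dilatation with reduced special fiber) simultaneously supplies the flatness of the presented algebra, closing the isomorphism above. The delicate points I anticipate are the triangular reorganization of the generators and the bookkeeping that no relations beyond the displayed ones survive, i.e. the flatness of each intermediate dilatation.

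Finally, for (ii) I would use that a $\G_{a,\O}$-action on $C=\spec A$ is the same as an $\O$-linear coaction $A\to A[s]$, and that such a coaction is the restriction of a $\G_{a,K}$-coaction $K[x]\to K[x][s]$ precisely when the latter maps $B=A$ into $A[s]$, i.e. leaves $B$ stable; conversely any $B$-stable coaction restricts to $A$, the coaction axioms being inherited through $A\hookrightarrow K[x]$. It then suffices to note that every $\G_{a,K}$-action on $\A^{1}_{K}$ is a translation $x\mapsto x+\psi(s)$, since $\mathrm{Aut}(\A^{1}_{K})$ is the affine group and $\G_{a}$ admits no nontrivial homomorphism to $\G_{m}$, and that the homomorphisms $\psi\colon\G_{a,K}\to\G_{a,K}$ are exactly the additive polynomials $\psi(s)=\sum_{j=1}^{r}\lambda_{j}s^{p^{s_{j}}}$ with $s_{j}\in\ZZ_{\geq0}$ and $\lambda_{j}\in K$. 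This yields the claimed bijection.
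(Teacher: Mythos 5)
Your overall skeleton---realizing $\O[C]$ as a tower of affine modifications $B_{i+1}=B_{i}[t^{-1}(tA\cap B_{i})]$ of $\A^{1}_{\O}$---is exactly the Barkatou--El Kaoui route the paper follows, and some pieces of your write-up are genuinely fine: the union-plus-finite-type argument that the tower stabilizes is correct (and in fact simpler than the paper's, which instead fixes the length of the tower to be the exponent $n$ with $\delta^{(e)}(x)=t^{n}$ and proves termination by a delicate computation with the iterative higher derivation), and your reduction of part (ii) to additive polynomials is reasonable. But the heart of the proof is missing, and what you put in its place is circular. The triangular structure of the tower---that each center $I_{i}=tA\cap B_{i}$ is generated by $t$, the previously constructed $f_{1},\dots,f_{i-1}$, and exactly \emph{one} new polynomial $f_{i}\in\O[x_{1},\dots,x_{i}]$ with $\partial f_{i}/\partial x_{i}$ invertible modulo $t$---is not ``reordering the finitely many adjoined generators''; it is a theorem. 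It requires knowing first that each $I_{i}$ cuts out a $0$-dimensional \emph{reduced} (hence, $k$ being perfect, \'etale) subscheme of the special fiber of $B_{i}$, and then invoking the relative primitive-element statement \cite[Lemma 4.3]{BE12}, including its compatibility property across successive steps. In the paper these inputs come from the free $\G_{a}$-action: $B_{i}/I_{i}$ embeds into the kernel $R$ of the LFIHD induced on the reduced special fiber $A/tA$, and $R$ is $0$-dimensional and reduced by \cite[Lemmata 2.1, 2.2]{CM05}. Your proposal never uses the action (or its freeness, or reducedness of the special fiber in any non-circular way) after identifying the generic fiber, and that is a structural red flag: nothing in your argument rules out a center $I_{i}$ containing a whole component of the special fiber of $B_{i}$, or a step at which several new elements must be adjoined at once.

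The circularity is this: you deduce $0$-dimensionality, radicality, and the Jacobian condition from the presentation $A/tA=k[\bar x_{1},\dots,\bar x_{n+1}]/(\bar f_{1},\dots,\bar f_{n})$ together with reducedness of the special fiber and relative dimension $1$; but at that point you only have a surjection $E/I\twoheadrightarrow A$, where $I=(tx_{2}-f_{1},\dots,tx_{n+1}-f_{n})$, so $A/tA$ is merely a \emph{quotient} of $k[\bar x_{1},\dots,\bar x_{n+1}]/(\bar f_{1},\dots,\bar f_{n})$, and reducedness of a quotient implies nothing about radicality of $(\bar f_{1},\dots,\bar f_{n})$. To upgrade the surjection to an isomorphism you need $t$-torsion-freeness of $E/I$, which you propose to extract from \'etaleness of the centers---that is, from exactly the facts whose derivation presupposed the isomorphism. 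The paper breaks this circle by establishing property (i) intrinsically for the ideals $I_{i}=tA\cap B_{i}$ (via the action and \cite[Lemma 4.3]{BE12}) \emph{before} any presentation is written down, and only then proving $\ker\psi=I$, by combining \cite[Lemma 4.5]{BE12} with the $\delta^{(e)}$ and $\delta^{(ep^{\ell})}$ computation in Lemma \ref{l-tec}(iii); some such non-circular input is indispensable, and it is precisely where freeness of the action, reducedness of the special fiber, and perfectness of $k$ actually do their work.
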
  
Section \ref{s-basics} sets the notation of the paper, while the proof of Theorem \ref{tt-main} is in Section \ref{s-last}.

\section{Basics}\label{s-basics} 
A $\mathbb{G}_{a, \O}$-action on the affine $\O$-curve $C= \spec\, B$
is equivalent to a sequence 
$$\delta^{(i)}:B\rightarrow B, \,\ i = 0,1,2, \ldots$$
of $\O$-linear maps sharing the conditions (see \cite{Miy68}):
\begin{itemize}
\item[(a)] The map $\delta^{(0)}$ is the identity,
\item[(b)] for any $b\in B$ there is $i\in\mathbb{Z}_{>0}$
such that $\delta^{(j)}(b) = 0$ for any $j\geq i$, 
\item[(c)] we have the \emph{Leibniz rule} 
$$\delta^{(i)}(b_{1}\cdot b_{2}) = \sum_{i_{1} + i_{2} = i}\delta^{(i_{1})}(b_{1})\cdot \delta^{(i_{2})}(b_{2}),$$
where $i\in \mathbb{Z}_{\geq 0}$ and $b_{1}, b_{2}\in B$, and
\item[(d)] for all indices $i, j\in \mathbb{Z}_{\geq 0}$: 
$$\delta^{(i)}\circ \delta^{(j)} = \binom{i+j}{i} \delta^{(i+j)}.$$
\end{itemize}
The sequence $\delta = (\delta^{(i)})$ is called a \emph{locally finite iterative
higher derivations} (LFIHD). The kernel $\ker(\delta)$ is the intersection of the linear subspaces  $\ker(\delta^{(i)})$ where $i$ runs over $\mathbb{Z}_{>0}$. Since $K$ is algebraically closed in the fraction field of $B$, we have $\ker(\delta) =  \O$ if the action is nontrivial. 
The $\G_{a, \O}$-action on $C$ is \emph{free} if the ideal generated by $\{\delta^{(i)}(b)\, ; i\in\mathbb{Z}_{>0}\text{ and }b\in B\}$
is $B$. The \emph{exponential morphism} is 
$$\exp(\delta T): B\rightarrow B[T],\,\, b\mapsto \sum_{i\in\mathbb{Z}_{\geq 0}}\delta^{(i)}(b) T^{i}.$$
\begin{lemma}\label{lem1} Assume that the $\mathbb{G}_{a, \O}$-action on $C$ is free.
Then there exist an LFIHD $\delta$ on $B$ corresponding to a free action and $x\in B$ such that 
$B\otimes_{\O}K = K[x]$ and $$\exp(\delta T)(x) = x + \sum_{i = 1}^{m}t^{n_{i}}T^{e_{i}}$$ for some natural numbers $n_{i}$ and some powers $$1 \leq e_{1}= p^{r_{1}}<\ldots <e_{m} = p^{r_{m}} =e,$$
where $p$ is the characteristic exponent of $\O$. Moreover, for any $\kappa\gg 0$ we may choose $\delta$  such that $\delta^{(e)}$ is an $\O$-derivation on the monomials of $K[x]$ of degree less than $\kappa$. Assume further that the special fiber of $C$ is reduced.
Then $B = \O[x]$ provided that $\min_{1\leq i\leq m}n_{i} =0$.
\end{lemma}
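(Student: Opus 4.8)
The plan is to obtain the normal form (the first two assertions) from the Barkatou--El Kaoui normalization, and then to derive the last equality $B=\O[x]$ by a short argument that uses the hypothesis $\min_{i}n_{i}=0$ through a leading-coefficient computation on the special fibre. For the normal form I would first pass to the generic fibre: the free action is nontrivial there, and a nontrivial $\G_{a,K}$-action on an affine $K$-curve whose function field has $K$ algebraically closed in it forces that curve to be $\mathbb{A}^{1}_{K}$, so $B\otimes_{\O}K=K[x_{0}]$. The identity $\exp(\delta(T+T'))=\exp(\delta T')\circ\exp(\delta T)$ coming from (c)--(d), evaluated at $x_{0}$, gives $g(T+T')=g(T)+g(T')$ for the polynomial $g$ with $\exp(\delta T)(x_{0})=x_{0}+g(T)$; hence $g$ is additive, $g(T)=\sum_{j}\mu_{j}T^{p^{s_{j}}}$ with $\mu_{j}\in K$. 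One then uses that $\delta$ preserves the $\O$-lattice $B$, that $B\cap K=\O$, and freeness, to replace $x_{0}$ by an integral coordinate $x\in B$ and to rescale $T$, thereby normalizing the surviving coefficients to exact powers $t^{n_{i}}$ and reindexing the exponents as $1\le e_{1}=p^{r_{1}}<\cdots<e_{m}=p^{r_{m}}$; the refinement that $\delta^{(e)}$ be a derivation on monomials of degree $<\kappa$ is read off from $\delta^{(i)}\circ\delta^{(j)}=\binom{i+j}{i}\delta^{(i+j)}$, using that $\binom{e}{i}\equiv0\pmod p$ for $0<i<e$ annihilates the middle terms of the higher Leibniz rule up to the chosen order. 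These normalizations are the technical heart of the first part and I would treat them as granted below.

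For the equality $B=\O[x]$ I argue by contradiction. Since $C$ is flat over the discrete valuation ring $\O$, the algebra $B$ is $\O$-torsion-free, so it embeds into $B\otimes_{\O}K=K[x]$ and we have $\O[x]\subseteq B\subseteq K[x]$. Suppose $B\neq\O[x]$. Then $B/\O[x]$ is a nonzero torsion $\O$-module, hence contains a nonzero element killed by $t$: there is $b\in B\setminus\O[x]$ with $tb\in\O[x]$. Write $tb=h(x)$ with $h\in\O[x]$. Because $b\notin\O[x]$ we have $h\notin t\,\O[x]$, so the reduction $\bar h\in k[x]$ is nonzero; and since $h(x)=tb\in tB$, its image in $\bar B:=B/tB$ vanishes, i.e. $\bar h(\bar x)=0$. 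A nonzero constant cannot satisfy $\bar h(\bar x)=0$, so $\bar h$ has degree $d\ge1$, with nonzero leading coefficient $c_{d}\in k$.

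Now I reduce the action modulo $t$. As $\delta$ is $\O$-linear it descends to an LFIHD $\bar\delta$ on $\bar B$, and reducing the normal form gives $\exp(\bar\delta T)(\bar x)=\bar x+\bar g(T)$ with $\bar g(T)=\sum_{i:\,n_{i}=0}T^{e_{i}}\in k[T]$. Here the hypothesis $\min_{i}n_{i}=0$ is decisive: it guarantees $\bar g\neq0$, and since every $e_{i}$ is a power of $p$ the polynomial $\bar g$ has no constant term and a well-defined degree $e':=\max\{e_{i}:n_{i}=0\}\ge1$ with leading coefficient $1$. The map $\exp(\bar\delta T)\colon\bar B\to\bar B[T]$ is a $k$-algebra homomorphism (it fixes $k$, since $\bar\delta^{(i)}$ kills constants for $i>0$), so applying it to $\bar h(\bar x)=0$ yields $\bar h(\bar x+\bar g(T))=0$ in $\bar B[T]$. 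Expanding $\bar h(\bar x+\bar g(T))=\sum_{l=0}^{d}c_{l}(\bar x+\bar g(T))^{l}$, the unique contribution to the top power $T^{de'}$ is the leading term of $c_{d}\,\bar g(T)^{d}$, whose coefficient is $c_{d}$. Thus $c_{d}=0$ in $\bar B$; but $c_{d}$ is a nonzero element of $k$, which injects into the nonzero ring $\bar B$ because the special fibre is nonempty by faithful flatness. This contradiction shows $B=\O[x]$.

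The only delicate point in the last part is the reduction from the global inequality $B\neq\O[x]$ to a single relation $tb=h(x)$ whose reduction $\bar h$ is a nonzero polynomial of positive degree; everything then turns on the fact that $\bar g$ is a nonzero additive polynomial, so that substitution of $\bar x+\bar g(T)$ into $\bar h$ strictly raises the $T$-degree and cannot collapse the top coefficient. I would also note that this deduction uses $\min_{i}n_{i}=0$ essentially but not the reducedness of the special fibre: the contradiction is a leading-coefficient identity valid over any nonzero $k$-algebra $\bar B$, so reducedness here only serves the surrounding theory rather than this particular implication.
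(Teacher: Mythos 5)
Your proof of the final assertion ($B=\O[x]$ when $\min_i n_i=0$) is correct, and it is in essence the paper's own argument with the details written out: the paper likewise reduces modulo $t$ and observes that $\bar x$ cannot satisfy a nonzero algebraic relation over $k$ (it phrases this via a primitive polynomial $s(T)\in\O[T]$ with $s(x)=t^rb$ rather than via the torsion module $B/\O[x]$, but the mechanism --- apply the reduced exponential to the relation and extract the top $T$-coefficient from the nonzero additive polynomial $\bar g$ --- is the same). Your observation that reducedness of the special fiber is not actually invoked in this implication is also consistent with the paper's proof.

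The genuine gap is the first part, which you explicitly treat as granted. That part is not a routine normalization of the given action, and the sketch you give for it relies on a mechanism that does not work. The clause ``for any $\kappa\gg0$ we may choose $\delta$ such that $\delta^{(e)}$ is an $\O$-derivation on monomials of degree $<\kappa$'' (with $e=e_m$ the \emph{top} exponent) will in general fail for the original LFIHD $\delta_1$; the paper must build a new action by adjoining a fresh term, setting $\exp(\delta T)(x)=\exp(\delta_1T)(x)+t^nT^e$, where $e$ is a power of $p$ chosen so that $e>(\kappa+2)\max\{e_j,\beta_\ell\}$ (the $\beta_\ell$ coming from a freeness relation $1=\sum_j c_j\delta_1^{(\beta_j)}(d_j)$) and $n$ is large enough that $t^nb_j\in\O[x]$ for generators $b_j$ of $B$, and must then verify that this $\delta$ still maps $B$ into $B[T]$ and still defines a \emph{free} action. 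The derivation property then follows from the size gap: in the Leibniz expansion $\delta^{(e)}(x^u)=\sum_{i_1+\cdots+i_u=e}\prod_l\delta^{(i_l)}(x)$ with $u<\kappa$, no combination of the smaller exponents $e_1,\dots,e_{m-1}$ can sum to $e$, so only tuples with a single index equal to $e$ survive. Your proposed explanation --- that $\binom{e}{i}\equiv0\pmod p$ for $0<i<e$ kills the middle terms --- is incorrect: the binomial coefficients in axiom (d) govern compositions $\delta^{(i)}\circ\delta^{(j)}$, not the Leibniz expansion (which, in the form (c), carries no such coefficients), and that congruence holds for \emph{every} power of $p$ regardless of size, whereas the derivation property genuinely requires $e$ to dominate $\kappa\cdot\max_{j<m}e_j$; e.g.\ if $u\,e_{m-1}=e$ the tuple with all $i_l=e_{m-1}$ contributes $t^{un_{m-1}}\neq0$. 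Likewise, rescaling $T$ cannot normalize all unit factors $c_j$ in $c_jt^{m_j}$ simultaneously; the paper instead modifies the higher derivation itself. Since the ``moreover'' clause is exactly what the rest of the paper uses (Lemma 3.2 repeatedly chooses $\delta$ with $\delta^{(e)}$ acting as a derivation on $x_1,\dots,x_{n+1}$), leaving it unproved leaves out the technical heart of the lemma.
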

\begin{proof}
We may assume that $p >1$.
Let $\delta_{1}$ be the LFIHD defined by the $\G_{a,\O}$-action. 
Choose $x\in B$ such that $\exp(T\delta_{1})(x)$ has positive minimal degree. Then $B\otimes_{\O}K = K[x]$ \cite[Lemma 2.2 (c)]{CM05}. 
As the extension $\delta_{K}$ of $\delta_{1}$
on $K[x]$ corresponds to a $\G_{a, K}$-action on $\mathbb{A}^{1}_{K}$, we have  
$\delta^{(j)}_{1}(x)\in \ker(\delta_{K})\cap B = \O$
for any $j\in\mathbb{Z}_{>0}$. So if $\delta^{(j)}_{1}(x)\neq 0$, then $\delta^{(j)}_{1}(x) = c_{j}t^{m_{j}}$ for some $m_{j}\in \mathbb{Z}_{\geq 0}$
and $c_{j}\in \O^*$.  Consequently, we modify $\delta_{1}$ by changing the $c_{j}$'s by $1$.
Now write 
$$\exp(\delta_{1} T)(x) =  x + \sum_{i = 1}^{m-1}t^{n_{i}}T^{e_{i}}, \text{ where } m\geq 1.$$ We introduce a new LFIHD $\delta$ on $K[x]$ (trivial on $K$) defined by 
$$\exp(\delta T)(x) = \exp(\delta_{1} T)(x) + t^{n}T^{e},$$
where $e$ and $n$ satisfy the following conditions. Let $b_{1}, \ldots, b_{s}\in B$
such that $B = \O[b_{1},\ldots, b_{s}]$ and consider a relation
$$1 = \sum_{j = 1}^{\alpha}c_{j}\delta^{(\beta_{j})}_{1}(d_{j}) \text{ for } c_{j}, d_{j}\in  B \text{ and }\beta_{j}\in\ZZ_{>0},$$
which is guaranteed from the freeness assumption. Let $\kappa$ be a constant greater than the degrees in $x$ of the $b_{j}\in K[x]$ and take $e$ a power of $p$ verifying
$$e >(\kappa + 2)\max \left\{e_{j}, \beta_{\ell}\,\,| 1\leq j <m\text{ and } 1\leq \ell\leq \alpha\right\}.$$
Finally, let $n\in \ZZ_{>0}$ such that $t^{n}b_{j}\in \O[x]$ for $1\leq j\leq s$.
We claim that $\delta$ induces an LFIHD on $B$ with the required properties.
Indeed, if $i < e$, then $\delta^{(i)}(b) = \delta^{(i)}_{1}(b)$ for any $b\in B$. Now
assume that $i\geq e$ and let $$b_{j} = \sum_{u}^{d}\lambda_{u}x^{u},\text{ where }\lambda_{u}\in K \text{ and } d ={\rm deg}_{x}(b_{j}).$$ Let $e_{\delta}$ reaching the maximum of the $e_{j}$'s.
By a direct induction on $u\leq d$, $t^{n}$ divides $\delta^{(\beta)}(x^{u})$ if
$\beta> u e_{\delta}$ (note that $\delta^{(\beta)}(x^{u}) = ux^{u-1}t^{n}$ if $\beta = e$). Thus
$\delta^{(i)}(b_{j})\in \O[x]$ for any $j$ and $\delta$ induces a free $\G_{a, \O}$-action on $C$. This yields the first claim.

Let us show the second one.
The assumption $\min_{1\leq i\leq m}n_{i} =0$ implies that $\delta^{(\gamma)}(x) =1$ for some $\gamma$ and that the residue class $\bar{x}$ of $x$ modulo $t$ 
is not algebraic over $k$. Indeed, if $\bar{x}$ would admit an algebraic dependence relation, then, applying the exponential map (from the $\G_{a, k}$-action on $\spec\, B/tB$) to this relation, we would get a contradiction. 
Let $b\in B\setminus tB$. Since $B\subseteq K[x]$, there is a primitive polynomial $s(T)\in\O[T]$ such that 
$s(x) = t^{r}b$ for some $r\in\mathbb{Z}_{\geq 0}$. Observe that $s(x)\equiv 0 \,{\rm mod}\, tB$ if $r>0$. So the previous step implies that $r = 0$ and $b\in \O[x]$. Now let $c\in tB$. Write $ c= t^{\ell}a$ for some $\ell\in\mathbb{Z}_{>0}$ and $a\in B\setminus tB$. As $a\in\O[x]$,
we have $c\in \O[x]$. Thus $B = \O[x]$, as required.  
\end{proof}

\section{Proof of the main result}\label{s-last}
Let $C  =  \spec\, B$ be an affine $\O$-curve with a free $\mathbb{G}_{a, \O}$-action. Assume that the special fiber is reduced and that $k$ is perfect.
Let $\delta$ be an LFIHD on $B$ as in the proof of Lemma \ref{lem1}. Set $e:= e_{m}$ and $n  := n_{m}$.   
Consider $B_{1} := \O[x], I_{1} := (tB)\cap B_{1}$ and inductively define the other rings and ideals by
$$ B_{i+1} = B_{i}[t^{-1}I_{i}] \text{ and } I_{i+1} := (tB)\cap B_{i}\text{ for }i=1,\ldots,n.$$
The inclusions $B_{i}\subseteq B_{i+1}$ yield a sequence of $\G_{a, \O}$-equivariant \emph{affine modifications} (cf. \cite{KZ99})
$$\spec\, B_{n+1}\rightarrow \spec\, B_{n}\rightarrow \ldots  \rightarrow \spec\, B_{2}\rightarrow \mathbb{A}_{\O}^{1} =  \spec\, B_{1},$$
where $\spec\,B_{i+1}$ is the complement of the hypersurface $\mathbb{V}(t)$ in the blow-up of $\spec\, B_{i}$
with center $I_{i}$.  
The next lemma is analogous to \cite[Lemma 4.4]{BE12}. We use the adapted
result \cite[Lemma 4.3]{BE12} for the ring $\O$ where $k$ needs to be perfect.
Note that in the argument of the proof we will vary $\delta$ and $e$, while the number $n$ will be constant in the entire paper. 
\begin{lemma}\label{l-tec}
There exist $x_{1},\ldots, x_{n+1}\in B$ and $f_{i}\in \O[T_{1}, \ldots , T_{i}]$ for $1\leq i\leq  n$
such that the following hold.
\begin{itemize}
\item[(i)] $x_{1} = x$, $t$ divides $f_{i}(x_{1},\ldots x_{i})$, and if $$x_{i+1} = t^{-1} f_{i}(x_{1},\ldots x_{i}),$$
then $t^{n-i+1}$ divides $\delta^{(e)}(x_{i})$ for an appropriate choice of $\delta$.
\item[(ii)] $B_{i} = \mathcal{O}[x_{1},\ldots, x_{i}]$, $I_{i} = (t, f_{1}(x_{1}), \ldots, f_{i}(x_{1},\ldots, x_{i}))\subseteq B_{i},$
and the reduction modulo $t$ of $\partial f_{i}/ \partial T_{i}$ is invertible.
\item[(iii)] We have $C = \spec\, B_{n+1}$.
\end{itemize}
\end{lemma}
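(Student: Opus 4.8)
The plan is to proceed by induction on $i$, constructing the $x_{i}$ and $f_{i}$ one level at a time and tracking two pieces of data simultaneously: the algebraic description of the affine modification $B_{i+1}=B_{i}[t^{-1}I_{i}]$, and the exact $t$-adic order of $\delta^{(e)}(x_{i})$. Throughout I will feel free to replace $\delta$ by the modified LFIHD produced in the proof of Lemma \ref{lem1} for a larger $\kappa$, hence a larger $e$, so that $\delta^{(e)}$ acts as an $\O$-derivation on every monomial in $x_{1},\ldots,x_{i}$ that occurs; only $\delta$ and $e$ vary, while $n=n_{m}$ stays fixed. The base case is immediate: set $x_{1}=x$ and $B_{1}=\O[x]$, and note that $\delta^{(e)}(x_{1})=t^{n}$, so $t^{\,n-1+1}$ divides $\delta^{(e)}(x_{1})$.

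For the inductive step, assume $B_{i}=\O[x_{1},\ldots,x_{i}]$ has been built ($1\le i\le n$). The key step will be to identify the ideal $I_{i}=(tB)\cap B_{i}$, whose quotient $B_{i}/I_{i}$ embeds into the reduced ring $B/tB$. Here I will invoke the form of \cite[Lemma 4.3]{BE12} valid over $\O$ when $k$ is perfect, which yields that $I_{i}$ is generated over $(t,f_{1},\ldots,f_{i-1})$ by a single polynomial $f_{i}(x_{1},\ldots,x_{i})$. Reducedness of $B/tB$ together with perfectness of $k$ makes the finite $k$-algebra $k[\bar x_{1},\ldots,\bar x_{i}]/(\bar f_{1},\ldots,\bar f_{i})$ étale over $k$, so the determinant of its lower triangular Jacobian, namely $\prod_{j\le i}\overline{\partial f_{j}/\partial T_{j}}$, is a unit; hence each $\overline{\partial f_{i}/\partial T_{i}}$ is invertible, which is (ii). Since $f_{i}(x_{1},\ldots,x_{i})\in I_{i}\subseteq tB$, the element $x_{i+1}=t^{-1}f_{i}(x_{1},\ldots,x_{i})$ lies in $B$, and as $t^{-1}I_{i}=(1,x_{2},\ldots,x_{i+1})$ with $x_{2},\ldots,x_{i}\in B_{i}$, I obtain $B_{i+1}=B_{i}[x_{i+1}]=\O[x_{1},\ldots,x_{i+1}]$.

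Next I will propagate the divisibility in (i). Having chosen $e$ large enough that $\delta^{(e)}$ differentiates $f_{i}$, the Leibniz rule at level $e$ gives
$$\delta^{(e)}(x_{i+1})=t^{-1}\sum_{j=1}^{i}\frac{\partial f_{i}}{\partial T_{j}}(x_{1},\ldots,x_{i})\,\delta^{(e)}(x_{j}).$$
By the inductive hypothesis $t^{\,n-j+1}$ divides $\delta^{(e)}(x_{j})$, the smallest exponent occurring at $j=i$, so after dividing by $t$ one gets $t^{\,n-i}=t^{\,n-(i+1)+1}\mid\delta^{(e)}(x_{i+1})$. Moreover the $j=i$ term is $\overline{\partial f_{i}/\partial T_{i}}\,\delta^{(e)}(x_{i})$ up to strictly higher $t$-order, and since $\overline{\partial f_{i}/\partial T_{i}}$ is a unit the order is \emph{exact}: $\delta^{(e)}(x_{i})=t^{\,n-i+1}u_{i}$ with $u_{i}$ a unit modulo $t$. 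In particular $\bar x_{1},\ldots,\bar x_{n}$ are $\bar\delta^{(e)}$-invariant, whereas $\delta^{(e)}(x_{n+1})$ is a unit modulo $t$; this is the mechanism that halts the construction after exactly $n$ steps.

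Finally I will prove (iii). A descending induction on $r$ shows $B=\bigcup_{i}B_{i}$: if $b\in B$ and $t^{r}b\in B_{i}$ with $r\ge 1$, then $t^{r}b=t\,(t^{r-1}b)\in (tB)\cap B_{i}=I_{i}$, whence $t^{r-1}b\in t^{-1}I_{i}\subseteq B_{i+1}$; writing an arbitrary $b\in B$ as $t^{-r}g$ with $g\in\O[x]=B_{1}$ then places $b$ in $B_{1+r}$. To see the union is already $B_{n+1}$, I will reduce the freeness relation $1=\sum_{j}c_{j}\delta^{(\beta_{j})}(d_{j})$ modulo $t$, so that the induced $\G_{a,k}$-action on $\spec\,(B/tB)$ is again free; a free additive action on a reduced affine $k$-curve makes this special fiber a disjoint union of affine lines over finite extensions of $k$, trivial as a torsor because the base is affine. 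The slice is $\bar x_{n+1}$, whose $\delta^{(e)}$ is a unit modulo $t$, and the $0$-dimensional invariant base is $k[\bar x_{1},\ldots,\bar x_{n}]/(\bar f_{1},\ldots,\bar f_{n})$; comparing with $B_{n+1}/tB_{n+1}=k[\bar x_{1},\ldots,\bar x_{n+1}]/(\bar f_{1},\ldots,\bar f_{n})$ gives $B/tB=B_{n+1}/tB_{n+1}$, i.e. $I_{n+1}=tB_{n+1}$, so the tower stabilises and $B=B_{n+1}$. The main obstacle is the algebraic step above: extracting a \emph{single} new generator $f_{i}$ with $\overline{\partial f_{i}/\partial T_{i}}$ invertible from $(tB)\cap B_{i}$. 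This is precisely where the reducedness of the special fiber and the perfectness of $k$ are indispensable, through an étale (Jacobian) criterion that is unavailable for imperfect residue fields, and it is the content imported from \cite[Lemma 4.3]{BE12}.
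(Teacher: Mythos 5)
Your proposal reproduces the paper's strategy for (i) and (ii) and for the tower argument, but the heart of (iii) — why the tower stabilizes at $B_{n+1}$ — is replaced by a torsor/slice argument that is false in positive characteristic, which is precisely the case this paper is about. In this paper ``free'' only means that the ideal generated by all $\delta^{(i)}(b)$, $i>0$, is the unit ideal, i.e.\ the fixed-point scheme is empty; in characteristic $p$ this does \emph{not} make the special fiber a $\G_{a,k}$-torsor over $\spec\,R$, $R=\ker(\delta_{k})$. For example, on $\A^{1}_{k}=\spec\,k[y]$ the Frobenius-twisted action $\exp(\delta T)(y)=y+T^{p}$ is free in this sense (since $\delta^{(p)}(y)=1$), yet every point has stabilizer $\alpha_{p}$ and there is no $s$ with $\exp(\delta T)(s)=s+T$ (any $\exp(\delta T)(g(y))=g(y+T^{p})$ involves only powers $T^{pj}$), so the fiber is not a trivial torsor over $\spec\,R=\spec\,k$. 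Worse, your slice principle fails outright: $s=y^{p}$ satisfies $\delta^{(p^{2})}(s)=1$, a unit, but $k[s]=k[y^{p}]\subsetneq k[y]=B/tB$. Hence ``$\delta^{(e)}(\bar x_{n+1})$ is a unit mod $t$'' does not imply $B/tB=R[\bar x_{n+1}]$. These twisted actions are not pathologies to be excluded: Theorem \ref{tt-main}(ii) explicitly allows $x\mapsto x+\sum_{j}\lambda_{j}^{p^{s_{j}}}$. In addition, your identification $B_{n+1}/tB_{n+1}=k[\bar x_{1},\ldots,\bar x_{n+1}]/(\bar f_{1},\ldots,\bar f_{n})$ is circular: a priori one only has a surjection from the right-hand side, and equality is equivalent to $\ker\,\psi\subseteq (t,f_{1},\ldots,f_{n})$, which is exactly the key step in the proof of Theorem \ref{tt-main} that the paper deduces \emph{from} Lemma \ref{l-tec}(iii). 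This is why the paper's proof of (iii) is the laborious induction it is: write $tb=\sum_{j}a_{j}x_{n+1}^{j}$, apply $\delta^{(e)}$ and use $\delta^{(e)}(x_{n+1})\equiv\prod_{i}\partial f_{i}/\partial x_{i}$ mod $t$ to force $a_{j}\in I_{n}$ for $j\notin p\ZZ$, then apply $\delta^{(ep^{\ell})}$ to strip off the Frobenius layers $j\in p^{\ell}\ZZ$, and only then conclude $b\in B_{n+1}$. Your proposal has no substitute for this step.

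There is also a smaller gap in (ii): \cite[Lemma 4.3]{BE12} requires the quotient $B_{i}/I_{i}$ to be $0$-dimensional \emph{and} reduced, and you only verify reducedness (via the embedding into $B/tB$), calling the algebra ``finite'' without justification. The paper obtains $0$-dimensionality from the group action: $B_{i}/I_{i}$ lands inside $R=\ker(\delta_{k})$, which is $0$-dimensional and reduced by \cite[Lemmata 2.1, 2.2]{CM05}; without this input, triangular generators with invertible diagonal partials need not exist at all (the center of the modification could be a curve). On the positive side, your opening of (iii) — the descending induction showing $B=\bigcup_{i}B_{i}$ by peeling off one power of $t$ at a time — is correct and is a clean variant of the paper's localization argument $B_{(t)}=K[x]=(B_{n+1})_{(t)}$ combined with the step ``$tb\in B_{n+1}\Rightarrow b\in B_{n+1}$''; but that step itself is the hard part, and it is exactly what remains unproven in your write-up.
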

\begin{proof} 
We show the existence of the $f_{i}$'s and (ii) by induction on $i$. We treat the case $i = 1$.
 Let $\delta_{k}$ be the LFIHD corresponding to the $\G_{a, k}$-action on $\spec\, B/tB$ and set $R:= \ker(\delta_{k})$. By our assumption and \cite[Lemmata 2.1, 2.2]{CM05},  the scheme $\spec\, R$ is $0$-dimensional and reduced. Therefore $B_{1}/I_{1}\subseteq R$. By \cite[Lemma 4.3]{BE12}, there is $f_{1}\in \O[T_{1}]$ such that $I_{1} = (t, f_{1}(x))\subseteq B_{1}$ and the reduction modulo $t$ of $\partial f_{1}/ \partial T_{1}$ is a unit. If $x_{2}:= t^{-1}f_{1}(x)$, then $B_{2} =  \O[x_{1}, x_{2}]$. 
Assume that Statement (ii) holds for $i< n$.  It follows that $B_{i+1} =\O[x_{1}, \ldots, x_{i+1}]$. Let $J$ be the preimage of $I_{i+1}$ by the morphism $E:= \O[T_{1},\ldots, T_{i+1}]\rightarrow B_{i+1},\, \, T_{j}\mapsto x_{j}.$  Since $\spec\, E/J$ is reduced and $0$-dimensional, by \cite[Lemma 4.3]{BE12} there exist polynomials $g_{j}\in \O[T_{1}\ldots, T_{j}]$ such that 
$$I_{i+1} = (t, g_{1}(x_{1}), \ldots, g_{i+1}(x_{1},\ldots, x_{i+1}))\subseteq B_{i+1}$$
and the reductions modulo $t$ of the $\partial g_{j}/ \partial T_{j}$'s are invertible.
From Property (ii) in \emph{loc. cit.} we may choose $g_{1},\ldots, g_{i}$ such that $I_{i} = (t, g_{1}(x_{1}), \ldots, g_{i}(x_{1},\ldots, x_{i}))$. Therefore, we take
$g_{j} = f_{j}$ for any $1\leq j\leq i$ (from our induction hypothesis) and let $f_{i+1} = g_{i+1}$, as required.

Choose an LFIHD $\delta$ with $e\gg 0$ as in Lemma \ref{lem1} such that 
$\delta^{(e)}$ acts as an $\O$-derivation on $x_{1}, \ldots, x_{n+1}$  (seen
as polynomials in $x$). We show (i) by induction on $i$. 
For $i = 1$, we have $\delta^{(e)}(x_{1}) = \delta^{(e)}(x) = t^{n}$, and for $i =2$, we get $\delta^{(e)}(f_{1}(x)) = t^{n}\partial f_{1}(x)/\partial x$. As $x_{2}:= t^{-1}f_{1}(x)$, it follows that $t^{n-1}$ divides $\delta^{(e)}(x_{2})$. 
Assume that Statement (i) holds for $i< n$. By induction hypothesis,
$t^{n-j+1}$ divides $\delta^{(e)}(x_{j})$ for any $j\leq i$. This implies 
$$\delta^{(e)}(x_{i+1}) = t^{-1} \left(\sum_{j = 1}^{i}\partial f_{j}/\partial x_{j} \cdot \delta^{(e)}(x_{j}) \right)$$
and so $t^{n-i}$ divides $\delta^{(e)}(x_{i+1}),$ proving (i).

(iii) If $n= 0$, then $B  =  \O[x] = B_{1}$ (see Lemma \ref{lem1}). Thus, we assume $n>0$.
Let $b\in B$ such that $tb\in B_{n+1}$ and write $tb = \sum_{j = 1}^{r}a_{j}x_{n+1}^{j}$ for some $a_{j}\in\O[x_{1},\ldots, x_{n}]$. By the reasonning we did before, we may choose $\delta$ as in Lemma \ref{lem1} such that $\delta^{(e)}$ acts as an $\O$-derivation
on the polynomials $a_{j}x_{n+1}^{j}.$
 We show by induction on $r$ that $a_{j}\in I_{n}$ for any $j$. The case $r =0$ being obvious, assume that the statement 
holds true for $r-1$. Then
$$t\delta^{(e)}(b) = \sum_{j = 1}^{r}\delta^{(e)}(a_{j})x_{n+1}^{j} + \delta^{(e)}(x_{n+1})\cdot \left(\sum_{j = 1}^{r}ja_{j}x_{n+1}^{j-1}\right).$$
By Statement (i) and the fact that $n>0$, $t$ divides the $\delta^{(e)}(a_{j})$'s. From a direct computation,
$$\delta^{(e)}(x_{n+1}) = t\alpha  + \prod_{i =1}^{n}\partial f_{i} / \partial x_{i} \text{ for some }\alpha\in B.$$
Therefore $t$ divides $\sum_{j = 1}^{r}ja_{j}x_{n+1}^{j-1}$. Let $p$ be the characteristic exponent of $\O$ and assume that $p>1$. 
By induction assumption, $a_{j}\in I_{n}$ for any nonzero $j\not\in p\mathbb{Z}$.
Now, there is $b'\in B$ such that 
$$tb' = \sum_{1\leq j \leq r, j\not\in p\mathbb{Z}}a_{j}x_{n+1}^{j}.$$ Letting $b_{1} = b-b'$ we may write 
$$tb_{1} = \sum_{u= 1}^{s}a_{up^{\ell}}(x_{n+1}^{p^{\ell}})^{u} \text{ for some } s, \ell \geq 1.$$ 
Here $\ell$ is taken so that $a_{u p^{\ell}}\neq 0$ for some $1\leq u\leq s$ with $u\not\in p \mathbb{Z}$.
Now applying $\delta^{(ep^{\ell})}$ we obtain 
$$t\delta^{(ep^{\ell})}(b_{1}) =  \sum_{u =1}^{s}\delta^{(ep^{\ell})}(a_{up^{\ell}})(x_{n+1}^{p^{\ell}})^{u} + (\delta^{(e)}(x_{n+1}))^{p^{\ell}}\cdot \left( \sum_{u =1}^{s}ua_{up^{\ell}}(x_{n+1}^{p^{\ell}})^{u-1}\right).$$
By the Leibniz rule and the fact that $n>0$, $t$ divides the $\delta^{(ep^{\ell}))}(a_{up^{\ell}})$'s. Thus  
$$t \text{ divides }\sum_{u =1}^{s}ua_{up^{\ell}}(x_{n+1}^{p^{\ell}})^{u-1}.$$
According to our induction hypothesis, $a_{up^{\ell}}\in I_{n}$ for any $u\not\in p\mathbb{Z}$. Continuing this process, we arrive at
$a_{j}\in I_{n}$ for any $j$. Let us show that $B = B_{n+1}$. By the previous step, we have
$$a_{j} = a_{0, j}t+ \sum_{i= 1}^{n}a_{i,j}f_{i} \text{ for } a_{i,j}\in B_{n}\text{ and } j= 1,\ldots r.$$
Setting $c_{i} = \sum_{j =1}^{r}a_{i,j}x_{n+1}^{j},$  we have 
$$b = t^{-1}\left(\sum_{j = 1}^{r}a_{j}x_{n+1}^{j}\right) = \sum_{i = 1}^{n}c_{i}x_{i}\in B_{n+1}, \text{ where }x_{0} =1.$$
Hence $b\in B_{n+1}$ provided that $t^{\varepsilon}b\in B_{n+1}$ for some $\varepsilon\in\mathbb{Z}_{\geq 0}.$ From
the equality $B_{(t)} = K[x] = (B_{n+1})_{(t)}$ we conclude that $B = B_{n+1}$. This completes the proof of the lemma.
\end{proof}
\begin{proof}[Proof of Theorem \ref{tt-main}.] We follow the proof of \cite[Theorem 3.1]{BE12}. 
Consider the surjective morphism
$\psi: E:= \O[T_{1},\ldots, T_{n+1}]\rightarrow B_{n+1} = B, \, T_{i}\mapsto x_{i},$
the ideal $I = (tT_{2} - f_{1}, \ldots, tT_{n+1} - f_{n})$ and let $J  =  \bigcup_{i} (I: t^{i}E).$
We show that $J = \ker\, \psi$. Note that $J\subseteq \ker\, \psi$ is clear. Let $b\in \ker\, \psi$. By performing several 
Euclidean divisions we get an equality
$$t^{\ell}b = \sum_{j =1}^{r}\gamma_{j}(tT_{j+1} - f_{j}) + \beta, \text{ where }\beta\in \O[T_{1}], \ell\in\mathbb{Z}_{\geq 0}, \text{ and } \gamma_{j}\in E.$$
Since $\psi(t^{\ell}b) = 0$ and $x_{1} = x$ is transcendental over $K$, we have $\beta =0$. Thus $\ker\, \psi = J$. 
It remains to prove that $I = J$. From \cite[Lemma 4.5]{BE12} (which is valid in our context) we only need to have $J\subseteq tE + I = (t, f_{1},\ldots, f_{n})$.
Let $b\in J$. Then $b = \sum_{j = 0}a_{j}T_{n+1}^{j}$ for some $a_{j}\in \O[T_{1},\ldots, T_{n}]$. Since $\psi(b) = 0$, the argument of the proof
of Lemma \ref{l-tec} (iii) implies that $\psi(a_{j})\in I_{n}$ for any $j$. Thus $b\in tE + I$, establishing the theorem.  
\end{proof}   
\ {\em Acknowledgments.} 
The author was supported by the Heinrich Heine University of D\"usseldorf. This research is supported by ERCEA Consolidator Grant 615655 - NMST and also by the Basque Government through the BERC 2014-2017 program and by Spanish Ministry of Economy and Competitiveness MINECO: BCAM Severo Ochoa excellence accreditation SEV-2013-0323.

\end{document}